\numberwithin{equation}{section}
\theoremstyle{plain}
\newtheorem{theorem}{Theorem}
\newtheorem{corollary}{Corollary}
\newtheorem{lemma}{Lemma}
\newtheorem{claim}{Claim}
\theoremstyle{definition}
\newtheorem{remark}{Remark}
\newcommand{\w}{\omega}
\newcommand{\I}{\mathcal I}
\newcommand{\A}{\mathcal A}
\newcommand{\U}{\mathcal U}
\newcommand{\F}{\mathcal F}
\newcommand{\Tau}{\mathcal T}
\newcommand{\id}{\mathrm{id}}
\begin{document}


\title[Large free sets in powers of universal algebras]{Large free sets in powers of universal algebras}

\author[T. Banakh]{Taras Banakh}
\email{t.o.banakh@gmail.com}
\address{Ivan Franko University of Lviv (Ukraine) and Jan Kochanowski Uniwersity in Kielce (Poland)}

\author[A. Bartoszewicz]{Artur Bartoszewicz}
\email{arturbar@p.lodz.pl}
\urladdr{http://im.p.lodz.pl/struktura/pracownicy.html?pracownik=116}
\address{Institute of Mathematics, Technical University of \L\'od\'z, W\'olcza\'nska 215, 93-005
\L\'od\'z, Poland}

\author[Sz.~G\l\c ab]{Szymon G\l\c ab}
\email{szymon.glab@p.lodz.pl}
\urladdr{http://im0.p.lodz.pl/~sglab/}
\address{Institute of Mathematics, Technical University of \L\'od\'z, W\'olcza\'nska 215, 93-005
\L\'od\'z, Poland}

\thanks{The first author has been partially financed by NCN grant DEC-2012/07/D/ST1/02087. The second and the third authors have been supported by the Polish Ministry of Science and Higher Education Grant No N N201 414939 (2010-2013).}


\subjclass[2010]{Primary: 17A50; Secondary: 08A99}

\keywords{free set, universal algebra}

\begin{abstract}
 We prove that for each universal algebra $(A,\A)$ of cardinality  $|A|\ge 2$ and an infinite set $X$ of cardinality $|X|\ge|\A|$, the $X$-th power $(A^X,\A^X)$ of the algebra $(A,\A)$ contains a free subset $\F\subset A^X$ of cardinality $|\F|=2^{|X|}$. This generalizes the classical Fichtenholtz-Kantorovitch-Hausdorff result on the existence of an independent family $\I\subset\mathcal P(X)$ of cardinality $|\I|=|\mathcal P(X)|$ in the Boolean algebra $\mathcal P(X)$ of subsets of an infinite set $X$.
\end{abstract}

\maketitle


The classical Fichtenholz-Kantorovitch-Hausdorff Theorem \cite{FK}, \cite{Ha} (see also \cite[9.21]{HBA} or \cite[p.83]{JW}) says that the power-set $\mathcal P(X)$ of any infinite set $X$ contains an independent family $\I\subset\mathcal P(X)$ of cardinality $|\I|=|\mathcal P(X)|$. The independence of $\I$ means that for any disjoint finite subsets $\A,\mathcal B\subset\I$ the intersection $(\bigcap_{A\in \A}A)\cap(\bigcap_{B\in\mathcal B}X\setminus B)$ is not empty.
In this paper we shall use this Fichtenholz-Kantorovitch-Hausdorff Theorem to prove the existence of free sets of large cardinality in powers of universal algebras.

First we recall the necessary definitions from the theory of universal algebras \cite{BS}.
By an {\em algebraic operation} on a set $A$ we understand a function $\alpha:A^{n_{\alpha}}\to A$ defined on a finite power of the set $A$. The number $n_{\alpha}\in\w$ is called the {\em arity} of the algebraic operation $\alpha$.
Let $(A,\A)$ be a {\em universal algebra}, i.e., a set $A$ endowed with a family of algebraic operations $\A$. A subset $S\subset A$ is called a {\em subalgebra} of the algebra $(A,\A)$ if $\alpha(S^{n_{\alpha}})\subset S$ for each algebraic operation $\alpha\in\A$. In this case $S$ is a universal algebra endowed with the family $\A|S=\{\alpha|S^{n_{\alpha}}\}_{\alpha\in\A}$ of restricted algebraic operations. A function $h:S\to A$ is called a {\em homomorphism} if it preserves algebraic operations in the sense that
$h(\alpha(x_1,\dots,x_{n_{\alpha}}))=\alpha(h(x_1),\dots,h(x_{n_{\alpha}}))$ for any algebraic operation $\alpha\in\A$ and points $x_1,\dots,x_{n_{\alpha}}\in S$.

Each subset $B\subset A$ is contained in the smallest subalgebra $\bar B\subset A$ called {\em the subalgebra generated by $B$}. We shall say that the subalgebra $\bar B$ is {\em freely generated by $B$} (or else that the subset $B$ is {\em free} in $A$) if every function $f:B\to A$ can be extended to a homomorphism $\bar f:\bar B\to A$. Free sets in universal algebras were introduced by E.~Marczewski in \cite{M1}, \cite{M2}, \cite{M3} and later studied in \cite{Nar1}, \cite{Nar2}, \cite{Glazek}, \cite{BF}, \cite[\S9-10]{HBA}, \cite{Gould}, \cite{AF}.

For a set $X$ by $(A^X,\A^X)$ we shall denote the $X$-th power of the universal algebra $(A,\A)$. This is the set $A^X$ of all functions from $X$ to $A$ endowed with the family $\A^X=\{\alpha^X\}_{\alpha\in\A}$ of algebraic operations $\alpha^X:(A^X)^{n_{\alpha}}\to A^X$, $\alpha\in\A$, defined by $$\alpha^X(f_1,\dots,f_{n_{\alpha}})(x)=\alpha(f_1(x),\dots,f_{n_\alpha}(x))\mbox{ \ for \ }(f_1,\dots,f_{n_{\alpha}})\in (A^X)^{n_\alpha}.$$

The main result of this paper is the following theorem. A special case of this theorem was proved in  \cite{BGP}.

\begin{theorem}\label{main} For any universal algebra $(A,\A)$ of cardinality $|A|\ge 2$ and any infinite set $X$ of cardinality $|X|\ge|\A|$, the universal algebra $(A^X,\A^X)$ contains a free subset $\F\subset A^X$ of cardinality $|\F|=2^{|X|}$.
\end{theorem}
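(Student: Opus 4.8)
The plan is to adapt the proof of the classical Fichtenholtz-Katorovitch-Hausdorff theorem behind Corollary~\ref{c4.2}, using in place of the two-element alphabet a small ``alphabet'' $D\subseteq A$ that records enough information to separate operations, and then to read off $\A^X$-freeness from an independence property of the resulting family of size $2^{|X|}$.

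\emph{Reduction and choice of $D$.} By the monotonicity of freeness it suffices to produce $\F\subseteq A^X$ with $|\F|=2^{|X|}$ which is free with respect to the family $\bar\A^X:=\{\gamma^X:\gamma\in\bar\A\}$ of operations on $A^X$; indeed, the functorial assignment $\gamma\mapsto\gamma^X$ is injective (as $X\ne\emptyset$) and carries the clone $\bar\A$ onto a unital, substitution-stable family $\bar\A^X\supseteq\A^X$, so every $\bar\A^X$-free set is $\A^X$-free. Since $X$ is infinite with $|X|\ge|\A|$, we have $|\bar\A|\le\max\{|\A|,\aleph_0\}\le|X|$, hence there are at most $|X|$ pairs of distinct operations $\alpha,\beta\in\bar\A$ with a common support $S_\alpha=S_\beta$. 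For each such pair fix $w_{\alpha,\beta}\in A^{S_\alpha}$ with $\alpha(w_{\alpha,\beta})\ne\beta(w_{\alpha,\beta})$, and fix two distinct points $a_0,a_1\in A$. Let $D$ be the union of $\{a_0,a_1\}$ and of all the finite sets $w_{\alpha,\beta}(S_\alpha)$. As a union of at most $|X|$ finite sets (or a finite set when $\bar\A$ is finite), $D$ satisfies $2\le|D|\le|X|$.

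\emph{A large $D$-valued independent family.} Let $I$ be the set of all pairs $(E,\varphi)$ with $E\in[X]^{<\w}$ and $\varphi\in D^{\mathcal P(E)}$. Each finite $E$ contributes $|D|^{2^{|E|}}\le\max\{|D|,\aleph_0\}\le|X|$ such pairs and $|[X]^{<\w}|=|X|$, so $|I|=|X|$. For $Y\subseteq X$ define $f_Y\colon I\to D$ by $f_Y(E,\varphi)=\varphi(E\cap Y)$. Repeating the Hausdorff argument: for pairwise distinct $Y_1,\dots,Y_n\subseteq X$ choose a finite $E\subseteq X$ meeting $(Y_i\setminus Y_j)\cup(Y_j\setminus Y_i)$ for all $i\ne j$; then $E\cap Y_1,\dots,E\cap Y_n$ are pairwise distinct, and for any $(d_1,\dots,d_n)\in D^n$ one can pick $\varphi$ with $\varphi(E\cap Y_i)=d_i$, so the diagonal map $(f_{Y_i})_{i\le n}\colon I\to D^n$ is onto. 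Taking $n=2$ and using $|D|\ge2$ shows the $f_Y$ are pairwise distinct, so $\F:=\{f_Y:Y\subseteq X\}$ has cardinality $2^{|X|}$. Via a bijection $I\to X$, regard $\F$ as a subset of $D^X\subseteq A^X$ with the property: for any pairwise distinct $g_1,\dots,g_n\in\F$ the map $x\mapsto(g_1(x),\dots,g_n(x))$ sends $X$ onto $D^n$.

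\emph{Freeness, and the main obstacle.} Since $\bar\A^X$ is unital and substitution-stable, Proposition~\ref{p3.4} applies. Let $\alpha,\beta\in\bar\A$ be distinct with $S_\alpha=S_\beta=S$, and let $x=(g_i)_{i\in S}\in\F^S$ be injective, so the $g_i$ are pairwise distinct. The witness $w_{\alpha,\beta}$ fixed above lies in $D^S$, so by the previous paragraph there is $x_0\in X$ with $g_i(x_0)=w_{\alpha,\beta}(i)$ for every $i\in S$; hence
$$\alpha^X(x)(x_0)=\alpha\big((g_i(x_0))_{i\in S}\big)=\alpha(w_{\alpha,\beta})\ne\beta(w_{\alpha,\beta})=\beta\big((g_i(x_0))_{i\in S}\big)=\beta^X(x)(x_0),$$
and therefore $\alpha^X(x)\ne\beta^X(x)$. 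By Proposition~\ref{p3.4}, $\F$ is $\bar\A^X$-free, hence $\A^X$-free, and $|\F|=2^{|X|}$, as desired. I expect the crux to be the middle step: one must notice that $\A^X$-freeness only requires the finite ``traces'' of $\F$ to cover the small set $D$ of witnesses rather than all of $A^S$ (which would be impossible when $|A|>|X|$), and that this weaker demand is still met by a family of full size $2^{|X|}$; passing to the clone $\bar\A$ and using $|X|\ge|\A|$ is exactly what keeps $|D|\le|X|$ and hence $|I|=|X|$.
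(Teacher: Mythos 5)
Your proof is correct, and while it shares the paper's skeleton --- reduce to the clone $\bar\A$, invoke Proposition~\ref{p3.4}, fix for each pair of distinct operations $\alpha\ne\beta$ with common support a witness tuple on which they differ, and then locate, for every injective $\xi\in\F^S$, a coordinate $x\in X$ at which $(\xi(i)(x))_{i\in S}$ realizes that witness --- it executes the key combinatorial step by a genuinely different route. The paper keeps the classical binary Fichtenholtz-Katorovitch-Hausdorff family $\U\subset\mathcal P(X)$ and encodes the witnesses into the coordinates: it enumerates all triples $(\alpha,\beta,s)$ with $s\in X^{S}$ by the points $x\in X$, sets $f_U(x)=p_x(i)$ when $s_x^{-1}(U)=\{i\}$, and uses the independence of $\U$ to produce the required coordinate as the index of a suitable triple. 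You instead prove a $D$-ary generalization of Hausdorff's construction: collecting all witness values into a set $D$ with $2\le|D|\le|X|$, you build $2^{|X|}$ functions in $D^X$ whose finite diagonals are surjective onto $D^n$, after which freeness is immediate because each witness lies in $D^S$. Your route is self-contained (it reproves the combinatorial core rather than citing Theorem~17.20 of \cite{JW}) and isolates a reusable lemma --- an independent family of full size over any alphabet of cardinality at most $|X|$ --- which moreover shows that the free set can be taken inside $D^X$ for such a small $D$; the paper's route is shorter once the binary theorem is taken as a black box, at the price of a less transparent coding. Both proofs correctly negotiate the crux you name: when $|A|>|X|$ one cannot ask the diagonals to cover $A^S$, only the small set of witnesses, and the hypothesis $|X|\ge|\A|$ is exactly what keeps that set (respectively, the paper's set of triples) of cardinality at most $|X|$.
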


The proof of this theorem uses the Fichtenholz-Kantorovitch-Hausdorff Theorem, which in turn, can be considered as a special case of Theorem~\ref{main}.

\begin{corollary}[Fichtenholtz-Kantorovitch-Hausdorff] For any infinite set $X$ the Boolean algebra $\mathcal P(X)$ contains an independent subset $\I\subset\mathcal P(X)$ of cardinality $|\I|=|\mathcal P(X)|$.
\end{corollary}

\begin{proof} Using characteristic functions, identify the Boolean algebra $\mathcal P(X)$ with the $X$-th power $\mathbf 2^X$ of a two-element Boolean algebra $\mathbf 2$. By Theorem~\ref{main}, the Boolean algebra $\mathcal P(X)$ contains a free subset $\mathcal I\subset\mathcal P(X)$ of cardinality $|\mathcal I|=|\mathcal P(X)|$. By Proposition 9.4 of \cite{HBA}, the family $\mathcal I$ is independent.
\end{proof}

\begin{remark} In light of the Fichtenholz-Kantorovitch-Hausdorff Theorem it is interesting to remark that in certain models of ZFC the smallest cardinality $\mathfrak i$ of a maximal independent subset in the Boolean algebra $\mathcal P(\w)$ is strictly smaller than the cardinality of continuum $2^\w$, see \cite[p.480]{Blass}, \cite{Va}.  This fact witnesses that Theorem~\ref{main} cannot be proved by a maximality argument using Zorn's Lemma.
\end{remark}

\section{Preliminaries}

In this section we present some auxiliary results, necessary for the proof of Theorem~\ref{main}.
In particular, in Lemma~\ref{l3} we prove a convenient criterion for recognizing free sets in universal algebras. First we introduce some definition.

A family $\A$ of operations on a set $A$ is called
\begin{itemize}
\item {\em unital} if $\A$ contains the identity operation $\id_A:A^1\to A$, $\id_A:(x_1)\mapsto x_1$;
\item  {\em stable under substitutions} (briefly, {\em substitution-stable}) if for any algebraic operation $\alpha\in\A$ and a function $s:\{1,\dots,n_{\alpha}\}\to \{1,\dots,m\}$ the algebraic operation $\alpha\circ A^s:A^m\to A$, $\alpha\circ A^s:(x_1,\dots,x_m)\mapsto \alpha(x_{s(1)},\dots,x_{s(n_{\alpha})})$, belongs to $\A$;
\item {\em stable under compositions} if for any algebraic operations $\beta\in\A$ and $\alpha_1,\dots,\alpha_{n_{\beta}}\in\A$ of the same arity $n=n_{\alpha_i}$, $i\le n_\beta$, the algebraic operation $\beta(\alpha_1,\dots,\alpha_{n_{\beta}}):A^n\to A$, $\beta(\alpha_1,\dots,\alpha_{n_{\beta}}):(x_1,\dots,x_n)\mapsto \beta(\alpha_1(x_1,\dots,x_n),\dots,\alpha_{n_{\beta}}(x_1,\dots,x_n))$ belongs to $\A$;
\item a {\em clone} if $\A$ is unital and stable under substitutions and compositions.
\end{itemize}

Let $(A,\A)$ be a universal algebra and $\bar \A$ be the smallest clone containing the operation family $\A$. It is clear that $|\bar\A|\le\max\{|\A|,\aleph_0\}$. It is standard to prove (see \cite[2.2]{BBG} for details) that for each subset $B\subset A$ the subalgebra $\bar B$ generated by $B$ coincides with the set $\bar\A(B)=\bigcup_{\alpha\in\bar\A}\alpha(B^{n_{\alpha}})$. This means that for each point $y\in\bar B$ we can find an algebraic operation $\alpha\in\bar\A$ and points $x_1,\dots,x_{n_{\alpha}}\in B$ such that $y=\alpha(x_1,\dots,x_{n_\alpha})$. We can additionally assume that the points $x_1,\dots,x_{n_{\alpha}}$ are pairwise distinct:

\begin{lemma}\label{l1} For each point $y\in\bar B$ there are an algebraic operation $\alpha\in\bar\A$ and pairwise distinct points $x_1,\dots,x_{n_{\alpha}}\in B$ such that $y=\alpha(x_1,\dots,x_{n_{\alpha}})$.
\end{lemma}

\begin{proof} Since $y\in\bar B=\bar \A(B)$, there are an algebraic operation $\beta\in\A$ and points $y_1,\dots,y_{n_{\beta}}\in B$ such that $y=\beta(y_1,\dots,y_{n_{\beta}})$. Let $\{y_1,\dots,y_{n_\beta}\}=\{x_1,\dots,x_n\}$ where the points $x_1,\dots,x_n$ are pairwise distinct.
Let $s:\{1,\dots,n_{\beta}\}\to\{1,\dots,n\}$ be the unique map such that $y_i=x_{s(i)}$ for all $1\le i\le n_{\beta}$, and $A^s:A^n\to A^{n_{\beta}}$, $A^s:(a_1,\dots,a_n)\mapsto (a_{s(1)},\dots,a_{s(n_{\beta})})$, be the corresponding substitution operator. Then for the algebraic operation $\alpha=\beta\circ A^s\in\bar\A$, we get $y=\beta(x_1,\dots,x_{n_{\beta}})=\beta\circ A^s(x_1,\dots,x_n)=\alpha(x_1,\dots,x_n)$.
\end{proof}

Now we shall elaborate some tools for recognizing free sets in universal algebras.
Our first characterization follows immediately from Lemma~\ref{l1} and was noticed by Marczewski in \cite{M2}.

\begin{lemma}\label{l2} A subset $B\subset A$ of a universal algebra $(A,\A)$ is free if and only if for any  algebraic operations $\alpha,\beta\in\bar\A$ and sequences $(x_1,\dots,x_{n_{\alpha}})\in B^{n_{\alpha}}$, $(y_1,\dots,y_{n_{\beta}})\in B^{n_{\beta}}$ the equality $\alpha(x_1,\dots,x_{n_{\alpha}})=\beta(y_1,\dots,y_{n_{\beta}})$ implies that $\alpha(f(x_1),\dots,f(x_{n_{\alpha}}))=\beta(f(y_1),\dots,f(y_{n_{\beta}}))$ for any function $f:B\to A$.
\end{lemma}

This characterization can be improved as follows.

\begin{lemma}\label{l3} A subset $B\subset A$ of a universal algebra $(A,\A)$ is free if and only if for any distinct algebraic operations $\alpha,\beta\in\bar\A$ of the same arity $n=n_{\alpha}=n_{\beta}$ the inequality $\alpha(x_1,\dots,x_n)\ne\beta(x_1,\dots,x_n)$ holds for any pairwise distinct points $x_1,\dots,x_n\in B$.
\end{lemma}

\begin{proof} To prove the ``only if'' part, assume that the set $B$ is free. Fix two distinct algebraic operations $\alpha,\beta\in\bar \A$ of the same arity $n=n_\alpha=n_{\beta}$. We need to show that $\alpha(x_1,\dots,x_n)\ne\beta(x_1,\dots,x_n)$ for any pairwise distinct points $x_1,\dots,x_n\in B$.

Since $\alpha\ne \beta$, there is a sequence $(y_1,\dots,y_n)\in A^n$ such that $\alpha(y_1,\dots,y_n)\ne\beta(y_1,\dots,y_n)$. Since the points $x_1,\dots,x_n$ are pairwise distinct, we can choose a function $f:B\to A$ such that $f(x_i)=y_i$ for all $i\le n$. Since the set $B$ is free, the function $f$ extends to a homomorphism $\bar f:\bar B\to A$. Then
 $$
 \begin{aligned}
 \bar f(\alpha(x_1,\dots,x_n))&=\alpha(\bar f(x_1),\dots,\bar f(x_n))=\alpha(y_1,\dots,y_n)\ne\beta(y_1,\dots,y_n)=\\
 &=\beta(\bar f(x_1),\dots,\bar f(x_n))=\bar f(\beta(x_1,\dots,x_n))
  \end{aligned}
 $$which implies that $\alpha(x_1,\dots,x_n)\ne \beta(x_1,\dots,x_n)$.
\smallskip

To prove the ``if'' part, assume that the set $B$ is not free. Applying Lemma~\ref{l2}, find algebraic operations $\alpha,\beta\in\bar\A$ and sequences $(x_1,\dots,x_{n_{\alpha}})\in B^{n_{\alpha}}$, $(y_1,\dots,y_{n_{\beta}})\in B^{n_{\beta}}$ such that $\alpha(x_1,\dots,x_{n_{\alpha}})=\beta(y_1,\dots,y_{n_{\beta}})$ and\break $\alpha(f(x_1),\dots,f(x_{n_{\alpha}}))\ne \beta(f(y_1),\dots,f(y_{n_{\beta}}))$ for some function $f:B\to A$. For the set $Z=\{x_1,\dots,x_{n_{\alpha}}\}\cup\{y_1,\dots,y_{n_{\beta}}\}$ choose pairwise distinct points $z_1,\dots,z_n\in Z\subset B$ such that $\{z_1,\dots,z_n\}=Z$.
Let $s:\{1,\dots,n_{\alpha}\}\to\{1,\dots,n\}$ and $t:\{1,\dots,n_{\beta}\}\to\{1,\dots,n\}$ be the unique functions such that $x_i=z_{s(i)}$ and $y_j=z_{t(j)}$ for all  $i\in\{1,\dots,n_{\alpha}\}$ and $j\in\{1,\dots,n_{\beta}\}$. The functions $s$ and $t$ induce the substitution operators $A^s:A^n\to A^{n_{\beta}}$, $A^s:(a_1,\dots,a_n)\mapsto (a_{s(1)},\dots,a_{s(n_{\alpha})})$, and $A^t:A^n\to A^{n_{\beta}}$, $A^t:(a_1,\dots,a_n)\mapsto (a_{t(1)},\dots,a_{t(n_{\beta})})$. Since the clone $\bar \A$ is substitution-stable, the algebraic operations $\alpha\circ A^s,\beta\circ A^t:A^n\to A$ belong to $\bar \A$ and have the same arity $n$. Moreover,
$$
\begin{aligned}
\alpha\circ A^s(z_1,\dots,z_n)&=\alpha(z_{s(1)},\dots,z_{s(n_{\alpha})})=\alpha(x_1,\dots,x_{n_{\alpha}})=\\
&=\beta(y_1,\dots,y_{n_{\beta}})=\beta(z_{t(1)},\dots,z_{t(n_{\beta})})=\beta\circ A^t(z_1,\dots,z_n).
\end{aligned}
$$
It remains to check that the algebraic operations $\alpha\circ A^s$ and $\beta\circ A^t$ are distinct.
For this observe that
$$
\begin{aligned}
\alpha\circ A^s&(f(z_1),\dots,f(z_n)) =\alpha(f(z_{s(1)}),\dots,f(z_{s(n_{\alpha})}))=\\
&=\alpha(f(x_1),\dots,f(x_{n_{\alpha}}))\ne\beta(f(y_1),\dots,f(y_{n_{\beta}}))=\\
&=\beta(f(z_{t(1)}),\dots,f(z_{t(n_{\beta})}))=\beta\circ A^t(f(z_1),\dots,f(z_n)).
\end{aligned}
$$
\end{proof}

\section{Proof of Theorem~\ref{main}}

Let $(A,\A)$ be a universal algebra of cardinality $|A|\ge 2$ and $X$ be an infinite set of cardinality $|X|\ge|\A|$. It follows that the clone $\bar\A$ generated by the family $\A$ has cardinality $|\bar\A|\le\max\{|\A|,\aleph_0\}\le|X|$.

For every $n\in\w$ consider the family of triples
$$\Tau_n=\{(\alpha,\beta,s)\in\bar\A\times\bar\A\times X^n:\alpha\ne\beta,\;\;n_{\alpha}=n=n_{\beta}\}$$and observe that it has cardinality $|\Tau_n|\le |\bar\A\times \bar\A\times X^n|\le |X|$.
Then the union $\Tau=\bigcup_{n\in\w}\Tau_n$ also has cardinality $|\Tau|\le|X|$ and
 hence admits an enumeration $\Tau=\{(\alpha_x,\beta_x,s_x):x\in X\}$ by points of the set $X$.
For every $x\in X$ let $n_x=n(\alpha_x)=n(\beta_x)$ be the arity of the algebraic operations $\alpha_x$ and $\beta_x$. Since $\alpha_x\ne\beta_x$, we can choose a point $p_x=(p_x(1),\dots,p_x(n_x))\in A^{n_x}$ such that $\alpha_x(p_x)\ne \beta_x(p_x)$.

The Fichtenholtz-Kantorovitch-Hausdorff Theorem~17.20 \cite[p.83]{JW}, yields an independent subfamily $\U\subset \mathcal P(X)$ of cardinality $|\U|=2^{|X|}$.

For each set $U\in\U$ and a point $x\in X$ consider the sequence $s_x=(s_x(1),\dots,s_x(n_x))\in X^{n_x}$ and the set $U_x=\{i\in\{1,\dots,n_x\}:s_x(i)\in U\}$.

For each set $U\in\U$ choose a function $f_U:X\to A$ such that $\{f_U(x)\}=p_x(U_x)$ for each point $x\in X$ with $|U_x|=1$.

We claim that the set $\F=\{f_U\}_{U\in\U}\subset A^X$ has cardinality $|\F|=2^{|X|}$ and is free in the algebra $(A^X,\A^X)$.

\begin{claim} The function $f:\U\to \F$, $f:U\mapsto f_U$, is bijective and hence $|\F|=|\U|=2^{|X|}$.
\end{claim}

\begin{proof} Given two distinct sets $U,V\in\U$, we should prove that ${f_U\ne f_V}$.
Since the family $\U$ is independent, there are points $s(1)\in U\setminus V$ and $s(2)\in V\setminus U$. The points $s(1),s(2)$ form a sequence $s=(s(1),s(2))\in X^2$.
Since the clone $\bar\A$ contains the identity operation $A^1\mapsto A$, $(x)\mapsto x$, and is substitution-stable, the operations $\alpha:A^2\to A$, $\alpha(x,y)\mapsto x$ and $\beta:A^2\to A$, $\beta:(x,y)\mapsto y$ belong to the clone $\bar\A$. The triple $(\alpha,\beta,s)$ belongs to the family $\Tau$ and hence $(\alpha,\beta,s)=(\alpha_x,\beta_x,s_x)$ for some $x\in X$. It follows that $p_x(1)=\alpha(p_x)\ne\beta(p_x)=p_x(2)$ and the sets $U_x=\{i\in\{1,2\}:s_x(i)\in U\}=\{1\}$ and $V_x=\{i\in\{1,2\}:s_x(2)\in V\}=\{2\}$ are singletons. Then $f_U(x)=p_x(1)\ne p_x(2)=f_V(x)$, which means that $f_U\ne f_V$.
\end{proof}

\begin{claim} The set $\F$ is free in the universal algebra $(A^X,\A^X)$.
\end{claim}

\begin{proof} By Lemma~\ref{l3}, it suffices to prove that for any distinct algebraic operations $\alpha,\beta\in\bar\A$ of the same arity $n=n_{\alpha}=n_{\beta}$ and pairwise distinct functions $f_{1},\dots,f_{n}\in \F$ we get $\alpha^X(f_{1},\dots,f_{n})\ne\beta^X(f_{1},\dots,f_{n})$.

For every $i\in\{1,\dots,n\}$ find a set $U_i\in\U$ such that $f_i=f_{U_i}$.
The independence of the family $\U$ guarantees the existence of a sequence $s=(s(1),\dots,s(n))\in X^n$ such that  $s(i)\in U_i\setminus U_j$ for all $j\ne i$.

The triple $(\alpha,\beta,s)$ belongs to the family $\Tau$ and hence is equal to $(\alpha_x,\beta_x,s_x)$ for some point $x\in X$.
The definition of the functions $f_{U_i}$ and the choice of the sequence $s$ guarantees that $f_i(x)=f_{U_i}(x)=p_x(i)$ for all $x\in X$ and $i\in\{1,\dots,n\}$. The choice of the sequence $p_x=(p_x(1),\dots,p_x(n))$ guarantees that  $\alpha(p_x(1),\dots,p_x(n))\ne\beta(p_x(1),\dots,p_x(n))$.
Then
$$
\begin{aligned}
\alpha^X&(f_{1},\dots,f_n)(x)=\alpha(f_{1}(x),\dots,f_{n}(x))=\alpha(p_x(1),\dots,p_x(n))\ne\\
&\ne \beta(p_x(1),\dots,p_x(n))=\beta(f_{1}(x),\dots,f_{n}(x))=\beta^X(f_1,\dots,f_n)(x)
\end{aligned}
$$which means that $\alpha^X(f_1,\dots,f_n)\ne\beta^X(f_1,\dots,f_n)$.
\end{proof}

\section{Acknowledgment}

The authors express their sincere thanks to Jo\~ao Ara\'ujo for pointing us relevant references in Independence Theory and to the anonymous referee for valuable suggestions on improvement of the presentation of the paper (whose initial version can be seen in \cite{BBG}).

\end{document}